\documentclass[journal]{IEEEtran}

\usepackage[english]{babel}
\usepackage{amsmath,amssymb,amsthm}
\usepackage{graphicx}
\usepackage{booktabs,tabularx,array}
\usepackage{enumitem}
\usepackage{siunitx}
\usepackage[hidelinks]{hyperref}
\usepackage{tikz}
\usetikzlibrary{arrows.meta,positioning}

\allowdisplaybreaks

\newtheorem{theorem}{Theorem}

\newcommand{\FA}{\ensuremath{\mathrm{FA}}}
\newcommand{\TD}{\ensuremath{\mathrm{TD}}}
\newcommand{\TDTR}{\ensuremath{\mathrm{TDTR}}}

\newcommand{\NFK}{\ensuremath{\mathrm{NF}^{K}}}
\newcommand{\NFR}{\ensuremath{\mathrm{NF}^{R}}}
\newcommand{\NFC}{\ensuremath{\mathrm{NF}^{C}}}

\newcommand{\ApproveFixed}{\ensuremath{\mathrm{ApproveFixed}}}
\newcommand{\ApproveRepairable}{\ensuremath{\mathrm{ApproveRepairable}}}
\newcommand{\NoMaterialProduct}{\ensuremath{\mathrm{NoMaterialProduct}}}
\newcommand{\RejectReinforce}{\ensuremath{\mathrm{Unsupported}}}
\newcommand{\NoEvidence}{\ensuremath{\mathrm{NoEvidence}}}

\newcommand{\dimp}{\ensuremath{\mathrm{imp}}}
\newcommand{\dexp}{\ensuremath{\mathrm{exp}}}

\newcommand{\GOPACS}{\ensuremath{\mathrm{GOPACS}}}
\newcommand{\PTDF}{\ensuremath{\mathrm{PTDF}}}

\newcommand{\ind}[1]{\mathbf{1}\!\left\{#1\right\}}

\title{From Product Evidence to Access Rights:\\
Auditing and Routing Non-Firm Grid Connections}

\author{Alban Kryeziu, Stuart X. Zhu, Evrim Ursavas%
\thanks{A. Kryeziu, S. X. Zhu, and E. Ursavas are with the Faculty of
Economics and Business, Department of Operations, University of Groningen,
Groningen, The Netherlands
(e-mail: a.kryeziu@rug.nl; x.zhu@rug.nl; e.ursavas@rug.nl).}%
\thanks{This work has been submitted to the IEEE for possible publication.}}

\begin{document}
\maketitle

\begin{abstract}
Non-firm connection products can accelerate grid access while reinforcements are delayed, but their planning value depends on evidence supporting the contractual promise. We develop a one-way evidence-to-access framework for calendarized products. Stage A constructs and fixes directional restriction calendars from public residual-load and day-ahead price-timing signals. Stage B audits the submitted calendar and one bounded common repair, tests joint
threshold attainability under the shared annual budget, and applies an
aggregation-aware Power Transfer Distribution Factor (PTDF) certificate. Stage C treats the resulting ledger as fixed in queue-aware allocation and intervention planning. In the primary policy-package evaluation, the evidence-supported menu
returns the same $2.119$~GW allocation as firm-only access, whereas authorizing the uniformly
derated fallback raises awards to $6.442$~GW and reduces backlog from
$136.076$ to $113.601$~GW-years; all $4.489$~GW of non-firm awards use the
fallback. In a target-free matched control, evidence-routed and
state-blind fallback menus attain primary objectives equivalent within EUR~1
and return the same accepted applicants, award years, routes, and
intervention portfolio. Thus, in the reported testbed, fallback authorization rather than
calendar-specific evidence support drives the access-volume effect.
\end{abstract}

\begin{IEEEkeywords}
Conditional access rights, congestion management, connection queues,
energy-market product design, evidence-based regulation, grid expansion
planning, non-firm grid access, time-dependent transport rights.
\end{IEEEkeywords}

\section*{Nomenclature}
\vspace{-0.45em}
\begingroup
\footnotesize
\setlength{\itemsep}{0pt}
\setlength{\parskip}{0pt}
\begin{IEEEdescription}[
\IEEEusemathlabelsep
\IEEEsetlabelwidth{$H_{0,nyd},H_{R,nyd},H^{J,\star}_{ny}$}
]
\item[$\TDTR,\FA$]
Time-dependent transport right, firm access.

\item[$\GOPACS,\PTDF$]
Dutch congestion-management platform and power transfer distribution factor.
\item[$\NFK,\NFR,\NFC$]
Fixed-calendar, repaired-calendar, and uniformly derated fallback routes.

\item[$n,y,t,d;\ q,k,j$]
Province, planning year, physical hour, direction; season, hour of week,
and canonical cell $j=(q,k)$.

\item[$g,a,X,r,\sigma$]
Applicant, intervention, contractual route, access menu, and recourse
signature.

\item[$RL^0,S^-,S^+$]
Residual load, directional construction scores.

\item[$\alpha,K_y,\eta_p$]
Annual restriction share, corresponding hour budget, and price-timing weight.

\item[$\rho_0,\mu_{\min},O_{\min},B_0$]
Retained-value threshold, directional-materiality threshold, offered-direction
threshold, and repair edit budget.

\item[$H_0,H_R,H^{J,\star}$]
Submitted-calendar score, bounded-repair score, and joint-oracle score.
\item[$c_{nyd},X_g,\zeta^{rX}_{gy}$]
Directional evidence state, horizon-stable applicant route, and route gate.

\item[$H,\delta_C,s_C^{\mathrm{cost}}$]
Priority horizon, fallback deliverability, and fallback cost index.

\item[$\delta_{gX},w_{gy},a_{gyX};\ K^p_{ny}(x)$]
Route indicator, award-year indicator, awarded capacity, and available
capacity in pool $p$.

\item[$B^r,\operatorname{BRI}^r,J_{\mathrm{PP}}^r,J_{\mathrm{TF}}^r$]
Backlog exposure, backlog-relief index, policy-package objective, and
target-free objective.
\item[$\gamma_{\mathrm{rec}},\gamma_{\mathrm{val}},
\lambda_{\mathrm{conn}}$]
Operating-recourse, project-value, and terminal-connection weights.
\end{IEEEdescription}
\endgroup
\vspace{-0.55em}

\section{Introduction}
\label{sec:introduction}

Meeting the Paris Agreement targets \cite{paris} and advancing the energy
transition require rapid electrification and timely grid connection. Across
Europe and many power systems worldwide, major transmission reinforcements face
long permitting and construction lead times
\cite{iea_grids_secure_2023,entsoe_grids_package_response_2025}, while
connection demand and congestion must be managed on seasonal, day-ahead (DA),
and intraday time scales
\cite{iea_seasonal_variability_2024,acer_core_rdct_annexI_2020}. This
planning--operation mismatch has increased interest in conditional (non-firm)
grid-access arrangements that unlock interim hosting capacity through explicit
curtailment rules. When interim capacity is rationed through queues and
priority, however, both allocation and activation rules must be transparent,
data-driven, and auditable \emph{ex ante}, rather than ad hoc or
discretionary.

The Netherlands provides a timely setting. The Dutch
\emph{Netcode Elektriciteit} introduced the time-dependent transport right
(TDTR) on TenneT's grid, guaranteeing the complete transport right during at
least $85\%$ of annual time; a 2026 code decision refined its operational
conditions \cite{ACM_ATR_2024,ACM_TDTR_2026}. The product studied
here is not a reconstruction of the regulated TDTR, which is activated
operationally. It is a transparent calendarized planning abstraction with a
comparable annual-duration budget. For such a product, advance timing certainty
has value only if the recurring restriction windows are defensible
\emph{ex ante}. The relevant question is not whether public data can approve a
connection, but whether they support the submitted calendar, only a limited
repair of that calendar, or only an alternative fallback right.

Public data can reveal recurring patterns in demand, renewable production,
prices, and congestion-management activity
\cite{openpowersystemdata2024,zomerdijk_open_data}. They cannot reproduce an
operator connection study because spatial aggregation, confidentiality,
unobserved remedial actions, and incomplete network parameters leave material
physical information unresolved \cite{liu_delft_model_free}. We therefore use
provincial residual load as a directional physical-pressure proxy and the
national day-ahead price as a market-timing covariate, not as a locational
congestion price or deliverability certificate
\cite{Staudt2019CongestionPrediction,Titz2024CongestionDrivers,
Trebbien2023ElectricityPrices}. Candidate calendars are fixed before GOPACS, PTDF, queue, and intervention outcomes are evaluated. The goal is to grade the evidence supporting a contractual promise, not to infer operator-grade feasibility from public data.

Existing research addresses adjacent expansion, congestion-management, and
connection-queue decisions, but the evidentiary strength of a proposed
calendarized access product is typically not modeled as a preceding decision. Long-term grid
expansion is commonly formulated as a multi-period mixed-integer planning
problem with discrete investments, construction lead times, and operational
constraints \cite{ieee_2003_capacity_milp,LUMBRERAS201619,filippo_pecci}. These models
determine what to build, where to build it, and when new capacity becomes
available. Flexibility and
congestion-management studies value or activate redispatch, dynamic tariffs,
capacity-limitation contracts, local markets, and grid-enhancing technologies
\cite{SHENDTU,VANDERHOLST2024110664,
vanDerHolst2026TEMPR,Spyrou2025TEMPR,Rui2024TEMPR,9312120}. Queue-management research examines the readiness, sequencing, withdrawal risk, connection timing and flexible agreements \cite{electricity_grid_expansion_interconnection_queue,managing_queues_flexible_contract,swarts2025contractingstrategieselectrolyzerssecure}. These streams clarify
how applicants should be sequenced or accommodated after the available product
menu is specified. They do not generally make the strength of that menu
endogenous to a preceding evidence audit. Consequently, a planning model that
admits every proposed calendar may overstate queue relief, whereas a model that
rejects every imperfect calendar may discard potentially useful conditional access.

Temporal representation of the product creates a further distinction from
conventional planning models. Representative days, weeks, and clustering methods
are widely used to reduce computational burden in capacity-expansion and
energy-system models
\cite{poncelet_selecting_best_days,pineda2018_chronological}. Their limitations are particularly important when
storage, chronology, forecast errors, or extreme events affect investment
outcomes \cite{HILBERS2023120624,PINEDA20161113}. Here each selected season--hour-of-week cell becomes a
recurring and publishable restriction window: the canonical week is part of
the contractual object itself, and must be audited as product design, not only
as computational compression. The unresolved governance question is therefore
how evidence should determine the contractual strength counted as
queue-relieving capacity, without either accepting every proposed calendar or
foreclosing all non-firm access.

We address this gap through a one-way evidence-to-access sequence.
Stage A constructs and fixes directional calendars from residual-load and
price-timing signals. Stage B evaluates the submitted product, one common
bounded repair, and joint threshold attainability under the shared annual
budget; a separate aggregation-aware \PTDF\ branch distinguishes nominal from
robust structural support. Stage C treats the resulting ledger as fixed when
assigning horizon-stable routes, award years, operating recourse, and candidate
interventions. Submitted-calendar support can open $\NFK$, repair-only support
can open $\NFR$, and the uniformly derated fallback $\NFC$ is available only
through explicit policy authorization. Downstream allocation value cannot
retroactively validate the product, and none of the evidence objects is
interpreted as AC feasibility, contingency security, hosting capacity, or
operator approval. Although calibrated to the Netherlands, the architecture is intended for
settings where queue eligibility precedes a choice among firm, calendarized,
and adaptive access products, with tariff, compensation, and cost-allocation
details remaining jurisdiction-specific.

The contributions are fourfold:
\begin{itemize}[leftmargin=*,nosep]
\item a one-way construction--audit--allocation architecture that prevents
outcome-based validation of a candidate access product.
\item a solver-certified submitted-product and common-repair audit, a joint
oracle for threshold attainability under the shared annual budget, and an
aggregation-aware PTDF certificate.
\item a horizon-stable route interface that preserves applicant identity and
the prescribed ordering among selected applicants within each capacity pool.
\item complementary policy-package and target-free experiments separating
fallback authorization from evidence conditioning.
\end{itemize}

In the Dutch 2025--2035 testbed, 109 of 264 directional records support the
submitted or repaired calendar; the baseline joint threshold is attainable for
97 of 132 province--years and proven unattainable for 35. Eleven applicants
representing $934.5$~MW satisfy the full-horizon fixed-calendar gate, but none
is selected. In Experiment Set I, fallback authorization raises awards from
$2.119$ to $6.442$~GW and reduces backlog from $136.076$ to
$113.601$~GW-years. Across $H\in\{0,5,10\}$ and two queue rules, the routed
award gain remains $3.867$--$4.322$~GW. A cross-started target-free matched control finds no material objective or
returned-allocation effect of evidence conditioning once
the same fallback is available.

\section{Problem Formulation}
\label{section:problem_formulation}

We now formalize the three-stage mechanism summarized in
Fig.~\ref{fig:overall_framework_main}. The model first constructs a
calendarized candidate product, then audits the fixed product without using
downstream planning outcomes, and finally incorporates the resulting evidence
ledger into contract routing, queue allocation, and intervention planning.

\begin{figure}[t]
\centering
\begin{tikzpicture}[
>=Latex,
font=\scriptsize,
node distance=4.5mm,
stage/.style={
draw,
thick,
rounded corners=1.2mm,
align=center,
inner sep=4pt,
text width=0.88\columnwidth,
minimum height=10mm,
fill=white
},
arr/.style={->,thick}
]
\node[stage] (A) {
\textbf{Stage A: Candidate-calendar construction}\\[-0.5mm]
Public residual-load and price signals
$\longrightarrow$
fixed directional calendars
};
\node[stage, below=of A] (B) {
\textbf{Stage B: Product-evidence audit}\\[-0.5mm]
fixed calendars and required audit objects
$\longrightarrow$
directional evidence ledger
};
\node[stage, below=of B] (C) {
\textbf{Stage C: Contract routing and queue planning}\\[-0.5mm]
Fixed ledger, horizon-stable routes, applicants, and network options
$\longrightarrow$
contracts, awards, and interventions
};
\draw[arr] (A) --
node[right,font=\scriptsize] {fixed product} (B);
\draw[arr] (B) --
node[right,font=\scriptsize] {fixed ledger} (C);
\end{tikzpicture}
\caption{One-way evidence-to-access framework. Stage A constructs and fixes
the candidate product; Stage B audits that product and exports a directional
evidence ledger; Stage C treats the ledger as fixed when assigning contractual routes and determining access awards and interventions.}
\label{fig:overall_framework_main}
\end{figure}

\subsection{Stage A: Candidate-Calendar Construction}
\label{subsec:stageA_calendar}

Stage A constructs a pair of directional restriction calendars for each
province and planning year from provincial residual load and national DA price
timing. GOPACS announcements, PTDF quantities, and Stage-C outputs are excluded
from construction.

\paragraph{Canonical policy domain}
For planning year $y$, let $\mathcal H_y$ denote the complete hourly UTC
calendar. Each physical hour $t\in\mathcal H_y$ is mapped to a season
$q(t)\in\mathcal Q$ and an hour of week $\phi_y(t)\in\mathcal K$, where
$\mathcal K:=\{0,\ldots,167\}$ and $\mathcal J:=\mathcal Q\times\mathcal K$.
The annual occupancy of cell $(q,k)\in\mathcal J$ is
\begin{equation}
o_{yqk}=\left|\left\{t\in\mathcal H_y:q(t)=q,\;\phi_y(t)=k\right\}\right|.
\label{eq:stageA_occupancy}
\end{equation}
For the equivalent canonical index $j=(q,k)$, we use the shorthand
\begin{equation}
\omega_{yj}:=o_{yqk}.
\label{eq:stageA_occupancy_shorthand}
\end{equation}
This shorthand is used in Stage B and in the accompanying technical report. Selecting a cell assigns the same directional restriction to every physical
hour represented by that season--hour-of-week pair. The canonical week is
therefore the policy domain of the product, not merely a computational device.

Provincial residual load is
\begin{equation}
RL^0_{nyt}=D_{nyt}-W^{\mathrm{on}}_{nyt}-W^{\mathrm{off}}_{nyt}-PV_{nyt},
\label{eq:stageA_residual_load}
\end{equation}
where $D_{nyt}$ is provincial demand and $W^{\mathrm{on}}_{nyt}$,
$W^{\mathrm{off}}_{nyt}$, $PV_{nyt}$ are onshore-wind, offshore-wind, and
photovoltaic generation. Its positive and negative parts define the two
directional pressures:
\begin{equation}
R^-_{nyt}=\left(RL^0_{nyt}\right)_+,\qquad
R^+_{nyt}=\left(-RL^0_{nyt}\right)_+.
\label{eq:stageA_directional_pressure}
\end{equation}
Positive residual load indicates conditions where additional withdrawal reinforces net demand, whereas negative residual load indicates conditions where additional injection accentuates modeled renewable surplus. Crucially, residual load is treated neither as a branch flow nor as evidence of local congestion.

Let $\operatorname{MM}_{ny}(\cdot)$ denote province--year min--max
normalization, returning the zero vector if the series is constant, and set
$r^\pm_{nyt}=\operatorname{MM}_{ny}(R^\pm_{ny\cdot})(t)$.

Let $p^{\mathrm{norm}}_{yt}\in[0,1]$ denote the normalized national
day-ahead price. This quantity is not interpreted as a province-specific
congestion price, a branch-loading measure, or evidence of physical
deliverability. It is included as a market-timing covariate. Residual-load
components are principal drivers of day-ahead electricity prices
\cite{Trebbien2023ElectricityPrices}, and national market prices can materially
affect the cost and usability of congestion-management contracts
\cite{VANDERHOLST2024110664}. A fixed-window product must therefore account
not only for the direction of local net-demand pressure but also for the market
exposure created by the timing of its restrictions.

To preserve this distinction, price modifies the score only when the
corresponding provincial residual-load direction is present. High-price hours
receive greater weight on the shortage side, whereas low-price hours receive
greater weight on the surplus side:
\begin{align}
S^-_{nyt}
&=
\operatorname{MM}_{ny}\!\left[
r^-_{ny\cdot}
\left(1+\eta_p p^{\mathrm{norm}}_{y\cdot}\right)
\right](t),
\nonumber\\
S^+_{nyt}
&=
\operatorname{MM}_{ny}\!\left[
r^+_{ny\cdot}
\left(1+\eta_p
\left(1-p^{\mathrm{norm}}_{y\cdot}\right)\right)
\right](t).
\label{eq:stageA_scores}
\end{align}
Thus price can reorder hours only within an already active local
residual-load direction. Setting $\eta_p=0$ gives the residual-load-only
control.

\paragraph{Candidate-calendar optimization}
Let $v^\pm_{nyqk}$ be the total score mass of the hours represented by cell
$(q,k)$. For each province--year pair, Stage A solves
\begin{equation}
\begin{aligned}
\max_{z^-,z^+}\quad
&\sum_{(q,k)\in\mathcal J}\!\left(v^-_{nyqk}z^-_{nyqk}+v^+_{nyqk}z^+_{nyqk}\right)\\
\mathrm{s.t.}\quad
&z^-_{nyqk}+z^+_{nyqk}\le1,\quad(q,k)\in\mathcal J,\\
&\sum_{(q,k)\in\mathcal J}o_{yqk}(z^-_{nyqk}+z^+_{nyqk})\le K_y,\\
&z^-_{nyqk},z^+_{nyqk}\in\{0,1\},\quad(q,k)\in\mathcal J,
\end{aligned}
\label{eq:stageA_calendar}
\end{equation}
where $K_y=\lceil\alpha|\mathcal H_y|\rceil$. Import and export share one
annual restriction-hour budget; no separate directional quota is imposed. The
headline $\alpha=0.15$ is an annual-duration analogue of an 85/15 availability
design, not a reconstruction of the operator's chronological TDTR activation
rule. The lifted calendars $w^0_{n,y,\dimp,t}=z^-_{n,y,q(t),\phi_y(t)}$
and $w^0_{n,y,\dexp,t}=z^+_{n,y,q(t),\phi_y(t)}$ are fixed before Stage B.

\subsection{Stage B: Product-Evidence Audit}
\label{subsec:stageB_audit}

Stage B evaluates each province--year--direction record after the Stage-A
calendar is fixed. Residual-load alignment measures directional pressure
timing, whereas price-timing alignment measures market exposure; neither is
a network-feasibility certificate. Because the core audit reuses construction
objects, it tests compliance with the declared timing requirements rather
than independently validating them. GOPACS and PTDF are separate diagnostics
and do not change the primary state. Throughout Stage B,
$\varepsilon=10^{-12}$ is a denominator safeguard in ratio calculations and
is absent from the repair and joint-oracle epigraph denominators.

The two core arrays are
\begin{align}
S^{\mathrm{RL},\dimp}_{nyt}
&=r^-_{nyt},
&
S^{\mathrm{RL},\dexp}_{nyt}
&=r^+_{nyt},
\nonumber\\
S^{\mathrm{PriceTiming},\dimp}_{nyt}
&=p^{\mathrm{norm}}_{yt},
&
S^{\mathrm{PriceTiming},\dexp}_{nyt}
&=1-p^{\mathrm{norm}}_{yt}.
\label{eq:stageB_core_arrays}
\end{align}
Price timing is a normative market-exposure requirement and is not
interpreted as a locational congestion measure.

For direction $d\in\{\dimp,\dexp\}$, define the residual-load mass
\begin{equation}
T^{\mathrm{RL},d}_{ny}
=
\sum_{t\in\mathcal H_y}S^{\mathrm{RL},d}_{nyt},
\qquad
\mu_{nyd}
=
\frac{T^{\mathrm{RL},d}_{ny}}
{T^{\mathrm{RL},\dimp}_{ny}
 +T^{\mathrm{RL},\dexp}_{ny}+\varepsilon}.
\label{eq:stageB_rl_materiality}
\end{equation}
The implemented directional-materiality flag is
\begin{align}
M_{nyd}
&=
\ind{\mu_{nyd}\ge\mu_{\min}}
\ind{T^{\mathrm{RL},d}_{ny}\ge10^{-9}},
\nonumber\\
\mu_{\min}
&=0.20.
\label{eq:stageB_materiality}
\end{align}
The $10^{-9}$ condition is a numerical nonzero guard on normalized evidence;
$\mu_{\min}$ supplies the substantive directional-materiality criterion.

The submitted calendar offers direction $d$ when
\begin{align}
\nu_{nyd}
&=
\frac{\sum_{t\in\mathcal H_y}w^0_{nydt}}
{K_y+\varepsilon},
\nonumber\\
O_{nyd}
&=
\ind{\nu_{nyd}\ge O_{\min}},
\qquad
O_{\min}=0.05.
\label{eq:stageB_offered_direction}
\end{align}

The active core-evidence set is
\begin{equation}
\mathcal M^{\mathrm{core}}_{nyd}
=
\left\{
m\in\{\mathrm{RL},\mathrm{PriceTiming}\}:
\begin{array}{l}
M_{nyd}=1,\\[-1mm]
V^{m,d,\star}_{K}(n,y)>0
\end{array}
\right\}.
\label{eq:stageB_active_core_set}
\end{equation}
Thus a core object enters the minimum only when it is present, active, and
material for the assessed direction.

For canonical cell $j\in\mathcal J$, aggregate physical-hour evidence as
\begin{equation}
s^{m,d}_{nyj}
=
\sum_{\substack{t\in\mathcal H_y:\\(q(t),\phi_y(t))=j}}
S^{m,d}_{nyt}.
\label{eq:stageB_cell_evidence}
\end{equation}
For an admissible directional calendar $w$ we set
\begin{align}
V^{m,d}_{ny}(w)
&=
\sum_{j\in\mathcal J}s^{m,d}_{nyj}w_j,
\nonumber\\
V^{m,d,\star}_{K}(n,y)
&=
\max_{z\in\{0,1\}^{|\mathcal J|}}
\left\{
\sum_{j\in\mathcal J}s^{m,d}_{nyj}z_j:
\sum_{j\in\mathcal J}\omega_{yj}z_j\le K_y
\right\}.
\label{eq:stageB_evidence_value}
\end{align}
When $\mathcal M^{\mathrm{core}}_{nyd}\ne\varnothing$, the submitted retained-value
score is
\begin{equation}
H_{0,nyd}
=
\min_{m\in\mathcal M^{\mathrm{core}}_{nyd}}
\frac{V^{m,d}_{ny}(w^0_{nyd})}
{V^{m,d,\star}_{K}(n,y)+\varepsilon}.
\label{eq:stageB_H0}
\end{equation}

Import and export share one annual restriction-hour currency, so repair is
computed once for the complete province--year product. Define
\begin{equation}
\mathcal I_{ny}
=
\left\{
(d,m):
d\in\{\dimp,\dexp\},\;
m\in\mathcal M^{\mathrm{core}}_{nyd}
\right\}.
\label{eq:stageB_joint_index}
\end{equation}
Let $\mathcal R_{B_{0y}}(w^0_{ny})$ contain joint binary calendars satisfying
directional non-overlap, the shared annual union budget, and
\begin{align}
\sum_{d\in\{\dimp,\dexp\}}
\sum_{j\in\mathcal J}
\omega_{yj}|w_{nydj}-w^0_{nydj}|
&\le B_{0y},
\nonumber\\
B_{0y}
&=
\left\lfloor0.02|\mathcal H_y|\right\rfloor .
\label{eq:stageB_repair_distance}
\end{align}
This is an occupancy-weighted directional-assignment distance. Switching one
canonical cell from import to export changes two binary assignments and
therefore contributes twice that cell's occupancy.

The bounded-repair problem is
\begin{equation}
\begin{aligned}
H^{R,\star}_{ny}
=
\max_{w,z}\quad &z\\
\mathrm{s.t.}\quad
&
V^{m,d}_{ny}(w_{nyd})
\ge
zV^{m,d,\star}_{K}(n,y),
\qquad (d,m)\in\mathcal I_{ny},\\
&
w\in\mathcal R_{B_{0y}}(w^0_{ny}),\\
&
0\le z\le1.
\end{aligned}
\label{eq:stageB_common_repair}
\end{equation}
When a feasible incumbent reaches $\rho_0$, denote the corresponding common
calendar by $w^R_{ny}$ and retain it as the repaired witness.
Its directional score is
\begin{equation}
H_{R,nyd}
=
\min_{m\in\mathcal M^{\mathrm{core}}_{nyd}}
\frac{V^{m,d}_{ny}(w^R_{nyd})}
{V^{m,d,\star}_{K}(n,y)+\varepsilon}.
\label{eq:stageB_HR}
\end{equation}
Repair certification uses solver incumbents and valid objective bounds. A feasible incumbent with objective at
least $\rho_0=0.80$ proves a pass; infeasibility or a valid upper bound below
$\rho_0$ proves a fail. Otherwise the solve remains unresolved. Unresolved
repairs are rerun and cannot enter the final ledger. 

The bounded repair tests whether the submitted product can be corrected within
a small edit neighborhood. To distinguish a larger redesign from a
shared-budget evidence conflict, we also solve the joint oracle without the edit-distance restriction
\begin{equation}
\begin{aligned}
H^{J,\star}_{ny}
=
\max_{w,z}\quad &z\\
\mathrm{s.t.}\quad
&
V^{m,d}_{ny}(w_{nyd})
\ge
zV^{m,d,\star}_{K}(n,y),
\quad(d,m)\in\mathcal I_{ny},
\\
&
w_{ny,\dimp,j}+w_{ny,\dexp,j}\le1,
\quad j\in\mathcal J,
\\
&
\sum_{j\in\mathcal J}\omega_{yj}
\left(
w_{ny,\dimp,j}+w_{ny,\dexp,j}
\right)
\le K_y,
\\
&
w_{nydj}\in\{0,1\},
\quad d\in\{\dimp,\dexp\},
\\
&
j\in\mathcal J,
\qquad
0\le z\le1.
\end{aligned}
\label{eq:stageB_joint_oracle}
\end{equation}
Relative to the bounded-repair problem,
\eqref{eq:stageB_joint_oracle} removes the edit-distance restriction while
retaining the same active evidence requirements, non-overlap rule, and
shared annual restriction-hour budget. A value
$H^{J,\star}_{ny}\ge\rho_0$ proves that at least one admissible common
calendar reaches the threshold. It does not by itself imply that the
submitted calendar failed or that redesign was necessary. Only when both
the submitted product and bounded repair fail does an oracle pass establish
that a redesign outside the repair neighborhood can satisfy the declared
requirements. A solver-certified upper bound below $\rho_0$ shows that the
active evidence requirements cannot be attained jointly within the declared
calendar family, non-overlap rule, and shared annual budget. We refer to
this outcome as a joint evidence conflict; it does not identify the
annual budget as the unique cause and does not imply physical infeasibility
or a need for reinforcement. The oracle is diagnostic and does not alter the
primary evidence state used by Stage C.

A same-budget common-hour construction is retained as a dynamic diagnostic.
It normalizes the active physical-hour evidence arrays, assigns each selected
hour to one direction, and selects at most $K_y$ non-overlapping hours.
Because this construction is a deterministic heuristic rather than an exact
adaptive-product optimization certificate, it does not assign a primary
evidence state and does not authorize an access route. Its complete definition
and numerical outputs are reported in the accompanying technical report.

The directional state belongs to
\[
\begin{gathered}
\{\ApproveFixed,\ApproveRepairable,\NoMaterialProduct,\\
\NoEvidence,\RejectReinforce\}.
\end{gathered}
\]

The implementation first assigns $\NoMaterialProduct$ when $M_{nyd}=0$ and
$\NoEvidence$ when the direction is material but no active core-evidence
object remains. A submitted joint-product budget or non-overlap violation
bypasses fixed support and is assigned $\ApproveRepairable$ only when the
common repair has a certified pass, reaches $0.80$, and satisfies the repaired
joint-product constraints. A material direction that is not offered is assigned
$\RejectReinforce$. Otherwise, the direction is fixed-supported when
$H_{0,nyd}\ge0.80$, repair-supported when a certified repair reaches
$H_{R,nyd}\ge0.80$, and fixed/repaired unsupported in every remaining case.
The dynamic construction, GOPACS warning, PTDF screen, and joint oracle are
separate diagnostics; none changes the primary evidence state. Stage B exports
the evidence states and corresponding calendar witnesses; it neither
authorizes MW capacity nor assigns the fallback route $\NFC$.

\paragraph{Aggregation-aware structural support}
Let $X_{nt}$ be province totals and reconstruct balanced bus injections as
$\widetilde x_t=WX_t+c_t$, where $W_{bn}\ge0$ is supported on buses assigned
to province $n$ and $\sum_b W_{bn}=1$. For PTDF matrix $P$, branch ratings
$\bar f_\ell>0$, and province bus set $\mathcal B_n$, define
\begin{align}
\bar P_{\ell n}&=\sum_{b\in\mathcal B_n}W_{bn}P_{\ell b},\quad
\delta_{\ell n}=\max_{b\in\mathcal B_n}|P_{\ell b}-\bar P_{\ell n}|,
\nonumber\\
\epsilon_t&=\max_{\ell\in\mathcal L}
\frac{\sum_n\delta_{\ell n}\widehat M_{nt}+|P_{\ell\cdot}c_t|}{\bar f_\ell},
\label{eq:mapping_error}
\end{align}
where $\widehat M_{nt}$ envelopes the unresolved within-province absolute
injection mass. The implemented choice $\widehat M_{nt}=2|X_{nt}|$ is a declared conditional
envelope; it is not a universal identity when substantial offsetting
within-province injections are unobserved. Let $\mathcal W^K_{nyd}$ denote the binary canonical calendars satisfying
$\sum_{j\in\mathcal J}\omega_{yj}w_j\le K_y$, with physical-hour lift
$w_t=w_{j_y(t)}$. For structural score $\widetilde G_t$, let
$\widetilde\rho_K(w^0)=V(w^0;\widetilde G)/V^\star_K(\widetilde G)$,
$E_0=\sum_t\epsilon_t w^0_t$, and
$E_K=\max_{w\in\mathcal W^K_{nyd}}\sum_t\epsilon_t w_t$.

\begin{theorem}[Aggregation-aware province-mapping certificate]
\label{thm:province_mapping_certificate}
Let $G_t,\widetilde G_t\ge0$ denote the structural loading scores from the
unknown balanced bus-level injections and the declared reconstruction,
respectively, with $w^0\in\mathcal W^K_{nyd}$. If the branch-stress
aggregator is one-Lipschitz, the declared mass envelope is valid, and the
oracle values are positive, then
\begin{equation}
\rho_K(w^0;G)\ge\left[\frac{\widetilde\rho_K(w^0)V^\star_K(\widetilde G)-E_0}
{V^\star_K(\widetilde G)+E_K}\right]_+,
\label{eq:province_mapping_bound}
\end{equation}
Hence the product is
aggregation-robust at threshold $\rho_{\mathrm{PTDF}}$ whenever the
right-hand side of \eqref{eq:province_mapping_bound} is at least that threshold.
\end{theorem}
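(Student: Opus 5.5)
The plan is to establish a pointwise, hour-by-hour perturbation bound $|G_t-\widetilde G_t|\le\epsilon_t$. This bound is then propagated through the linear calendar value $V(w;\cdot)=\sum_t G_t w_t$ and through its oracle maximum $V^\star_K(\cdot)$. The ratio bound \eqref{eq:province_mapping_bound} follows by combining a lower bound on the numerator with an upper bound on the denominator.

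\textbf{Pointwise bound.} Let $x_t$ denote the unknown balanced bus injections and $\widetilde x_t=WX_t+c_t$ the declared reconstruction. For each branch $\ell$, I will write
\[
P_{\ell\cdot}x_t-P_{\ell\cdot}\widetilde x_t=\sum_n\sum_{b\in\mathcal B_n}(P_{\ell b}-\bar P_{\ell n})x_{bt}+\sum_n\Bigl(\bar P_{\ell n}\sum_{b\in\mathcal B_n}x_{bt}-\bar P_{\ell n}X_{nt}\Bigr)-P_{\ell\cdot}c_t .
\]
Here I use that $P_{\ell\cdot}WX_t=\sum_n\bar P_{\ell n}X_{nt}$, because $W_{\cdot n}$ is supported on $\mathcal B_n$. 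Since province totals are consistent, $\sum_{b\in\mathcal B_n}x_{bt}=X_{nt}$, and the middle term vanishes. The first term is bounded by $\sum_n\delta_{\ell n}\sum_{b\in\mathcal B_n}|x_{bt}|$. The declared mass-envelope hypothesis then gives $\sum_{b\in\mathcal B_n}|x_{bt}|\le\widehat M_{nt}$.

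Dividing by $\bar f_\ell$ yields $|P_{\ell\cdot}(x_t-\widetilde x_t)|/\bar f_\ell\le\epsilon_t$ for every $\ell$. The branch-stress aggregator maps the vector of normalized flows $(f_\ell/\bar f_\ell)_\ell$ to $G_t$. Being one-Lipschitz in the sup norm, it gives $|G_t-\widetilde G_t|\le\epsilon_t$. I expect this step to be the main obstacle, because it is where the hypotheses are actually used. One must make precise that "valid envelope" means $\sum_{b\in\mathcal B_n}|x_{bt}|\le\widehat M_{nt}$, and one must handle the $c_t$ balancing term carefully. If the true injections are not exactly balanced in the same way as the reconstruction, the $c_t$ term appears with the sign shown above and is absorbed by $|P_{\ell\cdot}c_t|$. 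I also need to fix the Lipschitz norm as $\ell_\infty$ over branches, to match the $\max_\ell$ in \eqref{eq:mapping_error}.

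\textbf{Numerator.} Since $w^0_t\in\{0,1\}$, summing the pointwise bound gives
\[
V(w^0;G)\ge V(w^0;\widetilde G)-E_0=\widetilde\rho_K(w^0)V^\star_K(\widetilde G)-E_0 .
\]

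\textbf{Denominator.} For any $w\in\mathcal W^K_{nyd}$, the pointwise bound gives $V(w;G)\le V(w;\widetilde G)+\sum_t\epsilon_tw_t\le V^\star_K(\widetilde G)+E_K$. Taking the maximum over $w$ yields $V^\star_K(G)\le V^\star_K(\widetilde G)+E_K$. This uses only that both maxima range over the same feasible family $\mathcal W^K_{nyd}$.

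\textbf{Combining.} By assumption $V^\star_K(G)>0$, so $\rho_K(w^0;G)=V(w^0;G)/V^\star_K(G)$. If the numerator bound is nonnegative, dividing it by the larger quantity $V^\star_K(\widetilde G)+E_K$ only decreases the ratio, which gives the claim. If the numerator bound is negative, the claim reduces to $\rho_K\ge 0$, which holds because $G\ge0$. This explains the positive part $[\cdot]_+$. The robustness statement at threshold $\rho_{\mathrm{PTDF}}$ follows immediately.
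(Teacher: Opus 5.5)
Your proposal is correct and follows the same route as the paper's proof. Both rest on the pointwise bound $|G_t-\widetilde G_t|\le\epsilon_t$ from within-province PTDF dispersion and the mass envelope, a loss of at most $E_0$ on the submitted calendar, and an oracle increase of at most $E_K$ over the common family $\mathcal W^K_{nyd}$, combined through the positive part. Your explicit choices (the $\ell_\infty$ norm for the one-Lipschitz aggregator, province totals of the true injections equal to $X_{nt}$, and the envelope read as $\sum_{b\in\mathcal B_n}|x_{bt}|\le\widehat M_{nt}$) simply make precise what the paper's sketch leaves to its technical report.
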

\begin{IEEEproof}
Within-province PTDF dispersion and the mass envelope imply
$|G_t-\widetilde G_t|\le\epsilon_t$. Summing over the submitted calendar
shows that its value can decrease by at most $E_0$. Maximizing over the
admissible product domain shows that the oracle value can increase by at most
$E_K$. Combining these bounds yields
\eqref{eq:province_mapping_bound}; the complete derivation is given in the
accompanying technical report \cite{kryeziu2026report}.
\end{IEEEproof}

Failure of this sufficient certificate means only that aggregation-robust
support is not established under the declared reconstruction and mass
envelope. It does not establish physical infeasibility.

\subsection{Stage C: Contract Routing, Queue Allocation, and Intervention Planning}
\label{subsec:stage_c}

Stage C takes the directional ledger as fixed and determines applicant routes,
award years, backlog exposure, operating recourse, and candidate
interventions. The access menus are
\begin{align}
\mathcal X^{\mathrm{firm}}
&=\{\FA\},
\nonumber\\
\mathcal X^{\mathrm{un}}
&=\{\FA,\TD\},
\nonumber\\
\mathcal X^{\mathrm{sup}}
&=\{\FA,\NFK,\NFR\},
\nonumber\\
\mathcal X^{\mathrm{PTDF}}
&=\{\FA,\TD\},
\nonumber\\
\mathcal X^{\mathrm{rout}}
&=\{\FA,\NFK,\NFR,\NFC\}.
\label{eq:stageC_menus}
\end{align}
The generic $\TD$ benchmark uses the submitted Stage-A calendar and does not
use the evidence ledger. The PTDF-supported menu uses the same generic $\TD$
route but opens it only when all required horizon-wide nominal PTDF flags are
available and pass. Evidence-supported access instead opens $\NFK$ or $\NFR$
through the directional ledger. In the routed menu, $\NFK$ uses the submitted
calendar, $\NFR$ uses the effective repaired calendar, and $\NFC$ uses a
scalar planning deliverability factor.

For menu $r$, let
$\mathcal X^{r,F}=\mathcal X^r\cap\{\FA\}$ and
$\mathcal X^{r,N}=\mathcal X^r\setminus\{\FA\}$.
For $p\in\{F,N\}$, write
$\mathcal X^{r,p}=\mathcal X^{r,F}$ when $p=F$ and
$\mathcal X^{r,p}=\mathcal X^{r,N}$ when $p=N$.

Let $e_g\in\{0,1\}$ denote the common exogenous eligibility indicator used by
every non-firm menu. A missing, null, or nonbinary value is invalid and
excludes applicant $g$ from all non-firm routes; firm access is not conditioned
on $e_g$. Let
\begin{equation}
\mathcal Y_g^{\mathrm{op}}
=
\{y\in\mathcal Y:y\ge t_g\}
\label{eq:stageC_operating_horizon}
\end{equation}
denote the modeled operating years of applicant $g$, and define its relevant
direction set by
\begin{equation}
\mathcal D_g
=
\begin{cases}
\{d(g)\},&\text{if $g$ is unidirectional},\\
\{\dimp,\dexp\},&\text{if $g$ is bidirectional}.
\end{cases}
\label{eq:stageC_applicant_directions}
\end{equation}
The ledger must contain one state $c_{n(g),y,d}$ for every
$(y,d)\in\mathcal Y_g^{\mathrm{op}}\times\mathcal D_g$. If any required
record is unavailable, no strong route is certified and
$X_g^{\mathrm{sup}}=\varnothing$. Otherwise, the horizon-stable
evidence-supported route is
\begin{equation}
X_g^{\mathrm{sup}}
=
\begin{cases}
\NFK,
&
\substack{
c_{n(g),y,d}=\ApproveFixed\\
\text{for every }(y,d)\in
\mathcal Y_g^{\mathrm{op}}\times\mathcal D_g,
}
\\[2mm]
\NFR,
&
\substack{
c_{n(g),y,d}\in
\{\ApproveFixed,\ApproveRepairable\}\\
\text{for every }(y,d)\in
\mathcal Y_g^{\mathrm{op}}\times\mathcal D_g,\\
\text{and at least one required state is }\ApproveRepairable,
}
\\[2mm]
\varnothing,
&\text{otherwise}.
\end{cases}
\label{eq:stageC_supported_route}
\end{equation}
The routed-policy non-firm route is
\begin{equation}
X_g^{\mathrm{rout}}
=
\begin{cases}
\varnothing,
&e_g=0,\\
X_g^{\mathrm{sup}},
&e_g=1\text{ and }X_g^{\mathrm{sup}}\ne\varnothing,\\
\NFC,
&e_g=1\text{ and }X_g^{\mathrm{sup}}=\varnothing.
\end{cases}
\label{eq:stageC_routed_route}
\end{equation}
Thus missing evidence closes the strong $\NFK$ and $\NFR$ routes, but it does
not automatically close the explicitly authorized fallback. For an eligible
applicant, $\NFC$ is available only in a menu that authorizes it. The
admissible non-firm route is determined before the Stage-C optimization and is
constant across admissible award years.

Applicant $g$ requests $R_g$ MW, has energization year $t_g$, social-priority
indicator $s_g$, and baseline rank $\bar b_g$. Under priority horizon $H$,
the earliest consideration year is
\begin{equation}
p_g(H)
=
\max\{y_{\min},t_g-Hs_g\}.
\label{eq:stageC_priority_year}
\end{equation}
For the energization queue, applicants are ordered within province by
$(p_g(H),\bar b_g,\mathrm{ID}_g)$. The random benchmark uses a seeded
within-province permutation with seed 42. At $H>0$, the priority-adjusted year
remains the primary ordering key; at $H=0$, the implemented random benchmark
is the seeded within-province permutation.

Let $\delta_{gX},w_{gy}\in\{0,1\}$ and $a_{gyX}\ge0$ denote the route,
award-year, and awarded-capacity variables, respectively. Each accepted applicant receives one route and its full request in one
admissible year:
\begin{align}
\sum_{X\in\mathcal X^r}\delta_{gX}
&=
\sum_{y\in\mathcal Y}w_{gy}
\le1,
\nonumber\\
\sum_{X\in\mathcal X^r}a_{gyX}
&=
R_gw_{gy},
\nonumber\\
a_{gyX}
&\le R_g\delta_{gX},
\nonumber\\
a_{gyX}
&\le R_g\zeta^{rX}_{gy},
\qquad X\in\mathcal X^{r,N},
\nonumber\\
w_{gy}
&=0,
\qquad y\notin[p_g(H),t_g].
\label{eq:stageC_awards}
\end{align}
For each non-firm menu, $\zeta^{rX}_{gy}=1$ only when $e_g=1$ and route $X$
satisfies the corresponding menu condition above; otherwise
$\zeta^{rX}_{gy}=0$.
Define $A^X_{gy}=\sum_{\tau\le y}a_{g\tau X}$. Firm and non-firm queues are
enforced separately. If $\Delta_g^p$ indicates selection of pool $p\in\{F,N\}$
and $w^p_{gy}$ linearizes $w_{gy}\Delta_g^p$, every earlier--later pair
$i\prec_H j$ satisfies
\begin{equation}
\sum_{\tau\le y}w^p_{j\tau}\le\sum_{\tau\le y}w^p_{i\tau}+1-\Delta_i^p.
\label{eq:stageC_precedence}
\end{equation}
Since $\Delta_i^p=0$ relaxes the constraint for an unselected predecessor,
\eqref{eq:stageC_precedence} orders selected applicants within a pool rather
than imposing strict first-come-first-served admission.

With intervention availability $\chi_{ay}$ and province capacity credit
$\Delta^p_{an}$,
\begin{align}
K^p_{ny}(x)
&=
\bar K^p_{ny}
+
\sum_a\chi_{ay}\Delta^p_{an}x_a,
\nonumber\\
\sum_{g:n(g)=n}
\sum_{X\in\mathcal X^{r,p}}A^X_{gy}
&\le
K^p_{ny}(x),
\qquad p\in\{F,N\}.
\label{eq:stageC_pools}
\end{align}
All non-firm routes consume the common non-firm pool. Routing changes the
contractual representation and operating exposure of an award; it does not
create route-specific physical capacity.

Let $z^\pm_{nyqk}$ be the submitted calendar and $z^{R,\pm}_{nyqk}$ the
effective certified calendar. Route deliverability is
\begin{align}
\alpha^{\dimp}_{nyqk,\FA}
=\alpha^{\dexp}_{nyqk,\FA}
&=1,
\nonumber\\
(\alpha^{\dimp}_{nyqk,\TD},\alpha^{\dexp}_{nyqk,\TD})
&=(1-z^-_{nyqk},1-z^+_{nyqk}),
\nonumber\\
(\alpha^{\dimp}_{nyqk,\NFK},\alpha^{\dexp}_{nyqk,\NFK})
&=(1-z^-_{nyqk},1-z^+_{nyqk}),
\nonumber\\
(\alpha^{\dimp}_{nyqk,\NFR},\alpha^{\dexp}_{nyqk,\NFR})
&=(1-z^{R,-}_{nyqk},1-z^{R,+}_{nyqk}),
\nonumber\\
\alpha^d_{nyqk,\NFC}
&=\delta_C,
\qquad d\in\{\dimp,\dexp\}.
\label{eq:stageC_deliverability}
\end{align}
The uniformly derated fallback coefficient $\delta_C$ is a planning
deliverability factor, not an interruption probability or a substitute for
security simulation.

\paragraph{Operating recourse and objective}
Deliverable capacity per direction is
\begin{equation}
D^d_{nyqk\sigma}
=
\sum_{X\in\mathcal X^r}
\alpha^d_{nyqk,X}C_{ny\sigma X},
\qquad
d\in\{\dimp,\dexp\},
\label{eq:stageC_deliverable_capacity}
\end{equation}
where $C_{ny\sigma X}$ aggregates active operational capacity by province,
year, deterministic signature, and route. For a static signature, the
corresponding served quantity satisfies
\begin{equation}
0\le u^d_{nyqk\sigma}
\le
\varphi_{nyqk\sigma}D^d_{nyqk\sigma},
\qquad
d\in\{\dimp,\dexp\}.
\label{eq:stageC_recourse}
\end{equation}
Generation signatures use $d=\dexp$, whereas load signatures use
$d=\dimp$. The complete generation, load, and storage recourse formulation is
given in the accompanying technical report.
Storage uses signature-level charge, discharge, and cyclic state-of-charge
variables with route-weighted import/export limits. The annual
fixed service charge $F_{ny}$ is applied once per active applicant, the
contracted-capacity charge $\tau^{\mathrm{MWc}}_{ny}$ is applied only to
active firm capacity, and the volumetric transport-energy charge $\tau^E_{ny}$
enters operating recourse. Thus
\begin{equation}
\Psi^{\mathrm{fix},r}
=\sum_{g,y}\varrho_y\left[
F_{n(g)y}E_{gy}
+\tau^{\mathrm{MWc}}_{n(g)y}C^{\mathrm{op},\FA}_{gy}
\right].
\label{eq:stageC_fixed_bill}
\end{equation}
Monthly measured-peak coefficients are retained only for provenance and do not
enter the objective. These dominant-DSO coefficients are reduced-form planning
proxies, not the regulated TenneT TDTR tariff. No awarded MW is charged both
the firm contracted-capacity term and the fallback adjustment.

The repaired-route premium is charged once on an $\NFR$ award:
\begin{equation}
\Pi^{R,r}
=\lambda_R^r
\sum_{g,y}\varrho_y\tau^{\mathrm{MWc}}_{n(g)y}a_{gy,\NFR},
\label{eq:stageC_repair_premium}
\end{equation}
where $\lambda_R^r=0.25$ for the routed menu in Experiment Set I and is zero
in Experiment Set II. No final solve awards $\NFR$, so this term is zero in
the reported outcomes.

The fallback cost adjustment is
\begin{equation}
\Pi^{C,r}
=(s_C^{\mathrm{cost}}-1)
\sum_{g,y}\varrho_y\tau^{\mathrm{MWc}}_{n(g)y}
C^{\mathrm{op},\NFC}_{gy}.
\label{eq:stageC_nfc_term}
\end{equation}
It is an explicit model stress adjustment benchmarked to the
contracted-capacity coefficient, not a second regulated tariff; it vanishes
when $s_C^{\mathrm{cost}}=1$.

\paragraph{Experiment Set I: primary policy-package evaluation}
Set I uses $(\delta_C,s_C^{\mathrm{cost}})=(0.85,1.00)$ and minimizes
\begin{align}
J_{\mathrm{PP}}^r
={}&\mathrm{CAPEX}+\Psi^{\mathrm{fix},r}
+\gamma_{\mathrm{rec}}\Phi^r
+\Pi^{R,r}+\Pi^{C,r}
-\gamma_{\mathrm{val}}V^r
\nonumber\\
&-\lambda_{\mathrm{conn}}C^{\mathrm{tot},r}
+\Pi^{\mathrm{slack},r}.
\label{eq:stageC_policy_objective}
\end{align}
The terms are, respectively, investment, fixed network charges, operating
recourse, route adjustments, declared project value, a terminal connection
score, and penalties for stress-, connection-, and non-firm-utilization target
shortfalls. Set I also prohibits node $n$ from exporting in its own submitted
shortage-window cells. The objective is a composite planning scalarization, not
a welfare measure.

Backlog is counted from the applicant's energization year:
\begin{align}
B^r
&=\sum_{g,y}\ind{y\ge t_g}
\left(R_g-\sum_XA^X_{gy}\right),
\nonumber\\
\operatorname{BRI}^r
&=\frac{B^{\mathrm{firm}}-B^r}
{B^{\mathrm{firm}}-B^{\mathrm{un}}}.
\label{eq:stageC_backlog}
\end{align}
BRI may exceed one because indivisible requests can change the accepted
portfolio. For a fixed accepted set, backlog is independent of the particular
pre-energization award year.

\paragraph{Experiment Set II: target-free mechanism decomposition}
Set II disables the direct coverage and utilization targets, terminal score,
and node-specific export restriction while retaining the remaining physical,
economic, queue, and intervention inputs. It minimizes
\begin{equation}
J_{\mathrm{TF}}^r
=
\mathrm{CAPEX}
+\Psi^{\mathrm{fix},r}
+\gamma_{\mathrm{rec}}\Phi^r
+\Pi^{R,r}
+\Pi^{C,r}
-\gamma_{\mathrm{val}}V^r.
\label{eq:stageC_target_free_objective}
\end{equation}
``Target-free'' therefore means free of the explicit uptake targets used in
Set I, not assumption-free, policy-neutral, or welfare-maximizing. Within this
set, EvidenceSupported versus EvidenceRouted identifies fallback authorization.
The state-blind menu
\begin{equation}
\mathcal X^{\mathrm{blind}}=\{\FA,\NFC\}
\label{eq:stageC_blind_menu}
\end{equation}
opens $\NFC$ to every otherwise non-firm-eligible applicant independently of
the ledger. EvidenceRouted and BlindFallback otherwise share every applicant,
queue, capacity, intervention, fallback, operating, and objective input.

The headline Set-II calibration is $(0.70,1.50)$. The complete target
definitions, objective coefficients, robustness calibrations, operating
constraints, and solver settings are reported in the accompanying technical
report.
\section{Numerical Results}
\label{section:numerical_results}

\paragraph{Data and scenario construction}
The testbed covers the twelve Dutch provinces over 2025--2035. Quality-
controlled national hourly load is allocated to provincial sector profiles
using CBS annual shares; historical solar and wind shapes are scaled to
declared annual planning trajectories, with offshore production allocated
through year-specific landing shares. A leap-aware national day-ahead price
model supplies the market-timing signal. The untouched 2025 GOPACS record is
used only as an external holdout, while the PTDF branch uses a declared public
province-to-bus reconstruction. The $9.1$ GW baseline non-firm pool is an
exogenous planning envelope rather than published province-specific hosting
capacity. Full provenance and parameters are reported in the
technical report \cite{kryeziu2026report}.

\begin{table}[t]
\centering
\scriptsize
\setlength{\tabcolsep}{2.1pt}
\renewcommand{\arraystretch}{1.02}
\caption{Fixed parameters in the headline main-paper cases. Construction grids,
robustness values, target definitions, and solver settings are reported in the
accompanying technical report.}
\label{tab:fixed_parameters}
\begin{tabularx}{\columnwidth}{@{}p{0.18\columnwidth}
p{0.43\columnwidth}X@{}}
\toprule
Block & Fixed value & Role\\
\midrule
Stage A
& $D1,\ \alpha=0.15,\ \eta_p=1$
& Calendar construction.\\
Stage B
& $\rho_0=0.80,\ \mu_{\min}=0.20,\ O_{\min}=0.05$
& Evidence thresholds.\\
Repair/PTDF
& $B_{0y}=\lfloor0.02|\mathcal H_y|\rfloor,\ 
\rho_{\mathrm{PTDF}}=0.80$
& Repair radius and structural-screen threshold.\\
Stage C
& $H=5$, energization queue
& Primary priority specification.\\
Set I
& $(\delta_C,s_C^{\mathrm{cost}})=(0.85,1.00)$
& Policy-package fallback.\\
Set II
& $(\delta_C,s_C^{\mathrm{cost}})=(0.70,1.50)$
& Matched-control fallback.\\
Weights
& $\gamma_{\mathrm{rec}}=\gamma_{\mathrm{val}}=1$;
$\lambda_{\mathrm{conn}}=1$ in Set I and $0$ in Set II
& Objective scaling.\\
\bottomrule
\end{tabularx}
\end{table}

\paragraph{Locked candidate}
The headline candidate is the regular $D1$ design and is frozen before the
GOPACS holdout, PTDF validation, Stage-B audit, uncertainty analysis, and
Stage-C allocation. The configured construction grid contains the
residual-load-only control, twelve $\alpha$--$\eta_p$ combinations, two
forecast-error diagnostics, and two price-ramp diagnostics; the complete grid
is reported in the accompanying technical report. Every executed run satisfies
the shared annual union budget and directional non-overlap. In the locked
calendar, $78.4\%$ of selected hours are import-side and $21.6\%$ are
export-side. 

\begin{figure}[t]
\centering
\includegraphics[width=\linewidth]
{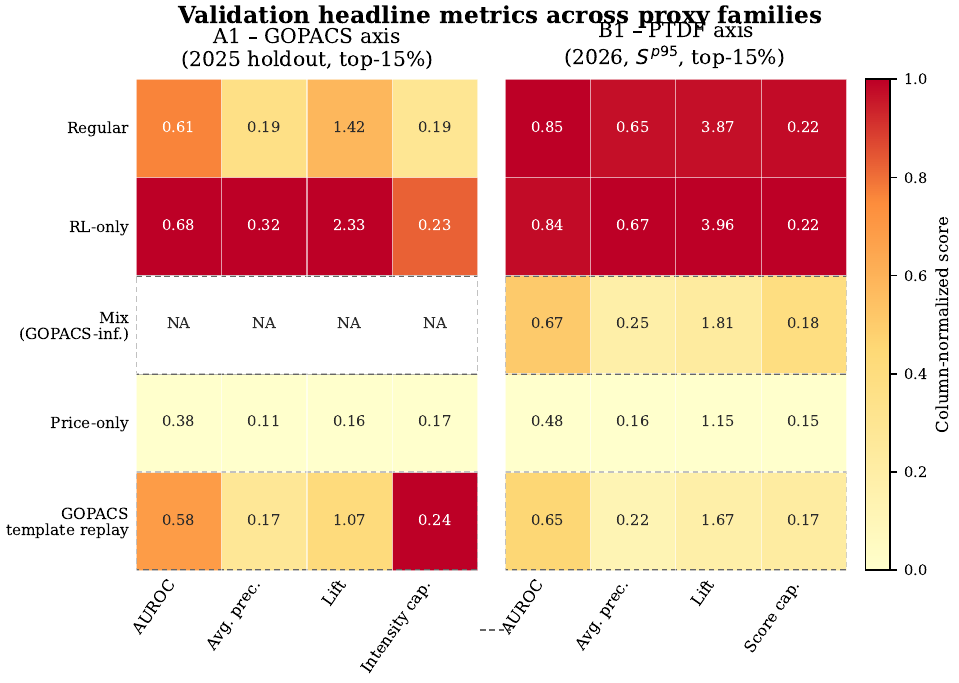}
\caption{Role-separated validation of Stage-A proxy families: (a) alignment
with observed GOPACS activity in the 2025 holdout and (b) alignment with the
PTDF $p95$ structural-loading target in 2026. Neither object is used to
construct or select the headline product.}
\label{fig:validation}
\end{figure}

Panel (a) shows that residual load carries most GOPACS-ranking information: its
AUROC, average precision, and top-$15\%$ lift are $0.68$, $0.32$, and $2.33$,
compared with $0.61$, $0.19$, and $1.42$ for D1. In Panel (b), the proxies are
nearly identical, with AUROC $0.84$--$0.85$ and lift $3.87$--$3.96$. Price
timing is therefore a distinct market-exposure requirement rather than a local
congestion predictor; it is the limiting retained-value component in all 109
fixed- or repair-supported directional records. In 100 joint demand, wind,
solar, and price perturbations, median coverage at severity threshold $0.75$
falls by $3.81$ percentage points for import and $9.49$ points for export;
this is a fixed-window stress test, not a stochastic-planning guarantee.

\subsection{Directional Evidence Ledger}
\label{subsec:evidence_ledger}
 
\begin{table}[t]
\centering
\footnotesize
\setlength{\tabcolsep}{3.6pt}
\renewcommand{\arraystretch}{1.04}
\caption{Final Stage-B directional evidence-support ledger.}
\label{tab:ledger}
\begin{tabular}{lrrr}
\toprule
Evidence state & Import & Export & Total \\
\midrule
Fixed-supported              & 93 & 15 & 108 \\
Repair-supported             &  0 &  1 &   1 \\
Low directional materiality  &  8 & 89 &  97 \\
Core evidence unavailable    &  0 &  0 &   0 \\
Fixed/repaired unsupported   & 31 & 27 &  58 \\
\midrule
Total                         &132 &132 & 264 \\
\bottomrule
\end{tabular}
\end{table}

Overall, 108 records support the submitted fixed calendar and one supports the
common bounded repair. Support is strongly directional: 93 province--year
pairs support only import, 16 support only export, and 23 support neither
direction as a fixed or repaired product; none supports both directions. The
sole repair-supported record is Zeeland export in 2033, where 156
occupancy-weighted directional hour edits raise the retained score from
$0.7988$ to $0.8337$. The common repair is solved once per province--year:
36 unique joint repair problems are attempted, one has a threshold-passing
certificate, 35 have solver-certified failure certificates, and none remains unresolved.

The joint oracle is resolved for all 132 province--years. The baseline
threshold is attainable for 97 products and proven unattainable for 35. Only
one submitted product fails while the joint oracle passes, and that case is
already resolved by the bounded repair; no baseline product fails both the
submitted and bounded-repair tests while passing the joint oracle. The oracle's
empirical role at the baseline is therefore to certify the 35 shared-budget
conflicts rather than to reveal a separate extensive-redesign class.

\begin{figure*}[t]
\centering
\includegraphics[width=0.88\textwidth]
{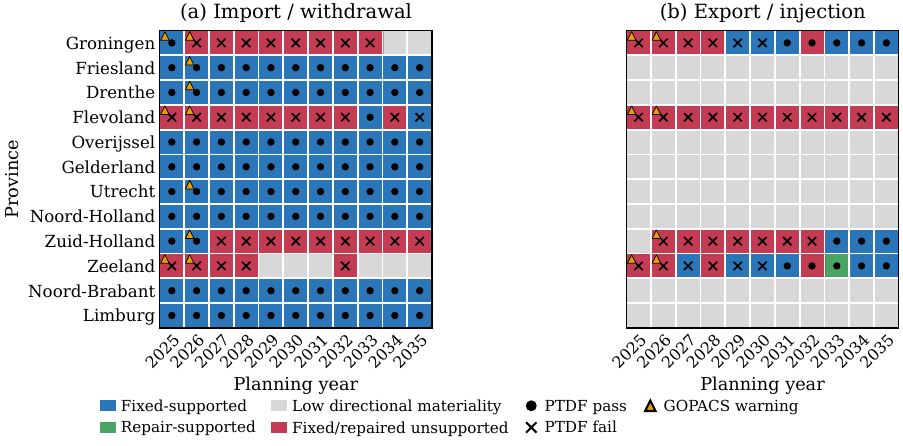}
\caption{Directional Stage-B evidence ledger, 2025--2035. Cell color reports
the annual evidence state for import/withdrawal (left) and export/injection
(right). PTDF symbols appear only where the nominal structural screen is
available, and triangles denote the GOPACS audit warning. An unmarked cell
therefore does not indicate PTDF failure. These diagnostics are evidence
screens rather than AC-feasibility, hosting-capacity, or connection-approval
results.}
\label{fig:directional}
\end{figure*}

Figure~\ref{fig:directional} reveals a pronounced directional and spatial
structure in the ledger, but not a universal coastal--inland divide. Drenthe,
Friesland, Gelderland, Limburg, Noord-Brabant, Noord-Holland, Overijssel, and
Utrecht form a stable load-dominated group: all $88$ import-side
province--year records are fixed-supported, whereas all $88$ corresponding
export-side records have low directional materiality. Groningen, Flevoland,
Zuid-Holland, and Zeeland contain all interannual evidence-state transitions
and the material export-side cases. Their import and export evidence evolves
differently over the planning horizon, so a single long-lived fixed calendar
is more difficult to justify from the available public evidence. Friesland and
Noord-Holland are coastal yet follow the stable load-dominated profile.
Accordingly, the relevant screening variables are the \emph{direction and temporal stability of the normalized net-injection evidence}, rather than a
binary coastal--inland label. For prospective developers this distinction is
material: electrolyzers, data centers, e-boilers, and other large loads should
primarily examine the import-side trajectory; generators should examine the
export-side trajectory; and storage projects must assess both because charging
and discharging use opposite access directions. Where the evidence state
changes repeatedly across years, shorter validity periods, scheduled
re-audits, or a conservative conditional route are more defensible than a
strong multi-year fixed-calendar promise.

The nominal PTDF screen provides a separate structural diagnostic. It retains
92 of 93 approved import records and 11 of 16 approved or repaired export
records. No record passes the sufficient aggregation-aware certificate under
$\widehat M_{nt}=2|X_{nt}|$. This result means only that the available
province-level data do not establish aggregation-robust support under the
declared envelope; it does not imply physical infeasibility or failure of an
operator connection study.

Annual directional support also does not automatically produce a selected
strong contract. Under the horizon-stable rule, 14 applicants representing
$977.5$~MW are $\NFK$-eligible over one operating year; the count declines to
12 over three years and to 11 over five years and over the full modeled
horizon. The 11 full-horizon applicants represent $934.5$~MW, while no
applicant is $\NFR$-eligible. None of these strong-route-eligible applicants is
selected by the final queue-allocation model. EvidenceSupported and
PTDFSupported therefore coincide with FirmOnly because of the final allocation
outcome, not because strong-route eligibility is absent. Moreover, across the
declared retained-value, materiality, and residual-load-only diagnostics, the
full-horizon result remains 11 $\NFK$-eligible applicants and zero
$\NFR$-eligible applicants.

\subsection{Experiment Set I: Policy-Package Evaluation}
\label{subsec:access_consequences}

\begin{table}[t]
\centering
\footnotesize
\setlength{\tabcolsep}{1.9pt}
\renewcommand{\arraystretch}{1.04}
\caption{Stage-C outcomes in Experiment Set I ($H=5$, energization queue).
Displayed values are rounded; differences in the text use unrounded outputs.}
\label{tab:headline}
\begin{tabular}{lrrrr}
\toprule
Menu & Award & Non-firm & Backlog & BRI\\
&[GW]&[GW]&[GW-yr]&\\
\midrule
Firm only          &2.119&0.000&136.076&0.000\\
Unscreened calendar&6.482&4.727&113.821&1.000\\
Evidence-supported &2.119&0.000&136.076&0.000\\
PTDF-supported     &2.119&0.000&136.076&0.000\\
Evidence-routed    &6.442&4.489&113.601&1.010\\
\bottomrule
\end{tabular}
\end{table}

EvidenceSupported and PTDFSupported coincide with FirmOnly, each awarding
$2.119$ GW and producing $136.076$ GW-years of backlog. EvidenceRouted awards
$6.442$ GW, including $4.489$ GW through $\NFC$, and reduces backlog to
$113.601$ GW-years. No capacity is assigned through $\NFK$ or $\NFR$.
Using unrounded outputs, fallback authorization adds $4.322$ GW and reduces
backlog by $22.475$ GW-years relative to EvidenceSupported. The routed award
is $40$ MW below the unscreened award and produces $0.220$ GW-years less
backlog, yielding $\operatorname{BRI}=1.010$. Set I therefore measures the
planning consequence of fallback authorization within the complete policy
package, not take-up of a strong calendar-specific route.

\subsection{Experiment Set II: Target-Free Mechanism Decomposition}
\label{subsec:blind_fallback_control}
 
\begin{table}[t]
\centering
\footnotesize
\setlength{\tabcolsep}{3.6pt}
\renewcommand{\arraystretch}{1.04}
\caption{Experiment-Set-II outcomes and cross-started matched control
($H=5$, energization queue, policy calibration).}
\label{tab:blind_fallback_control}
\begin{tabular}{lrrr}
\toprule
Menu & Award & Backlog & $\Delta J_{\mathrm{TF}}$ vs.\ blind\\
& [GW] & [GW-yr] & [EUR]\\
\midrule
Evidence-supported & 2.022 & 136.955 & --\\
Evidence-routed    & 6.442 & 113.601 & $-0.006$\\
Blind fallback     & 6.442 & 113.601 & $0.000$\\
\bottomrule
\end{tabular}
\end{table}
EvidenceRouted awards $4.420$~GW more and reduces backlog by
$23.354$~GW-years relative to EvidenceSupported. This contrast identifies the
effect of authorizing $\NFC$ when no selected horizon-stable fixed- or
repaired-calendar award is available. To isolate evidence conditioning, the
policy-calibration EvidenceRouted and BlindFallback cases are rerun with
opposite-case MIP starts under the same model inputs. Their primary objectives
differ by $-0.005859$ EUR, well inside the declared EUR~1 equivalence
tolerance. They return the same 31 accepted applicants, award years,
routes, $6.4416$~GW of total awards, $4.4521$~GW on $\NFC$, and the same
14-intervention portfolio. Hence no material primary-objective or returned
allocation effect of evidence conditioning is identified once the same
fallback is available. The auxiliary lexicographic tie-break did not certify
route-assignment uniqueness, so uniqueness among all primary-optimal solutions
is not claimed.

All thirteen core Stage-C cases select the same 14-intervention portfolio:
EUR $2.998$ billion of modeled CAPEX, $4.300$ GW of firm headroom, and
$0.560$ GW of non-firm-compatible headroom. All numerical and feasibility
gates pass, so menu differences arise from fallback authorization and applicant
assignment rather than different interventions.

\subsection{Interpretation and Scope Conditions}
\label{subsec:interpretation_scope}

The framework separates construction independence, external proxy validation,
product evidence, fallback authorization, and evidence conditioning. The
Stage-B retained-value benchmark is deliberately demanding because each
direction is compared with a full-budget oracle while import and export share
one annual budget. Materiality and offered-direction checks are therefore
reported separately. Bounded repair asks whether local edits suffice; 
the joint oracle asks whether any admissible common calendar can meet the
threshold. A solver-certified failure establishes incompatibility of the
active evidence requirements within the declared product constraints; it does
not establish physical-network infeasibility.

The PTDF results have the same bounded interpretation. The nominal score
evaluates one declared province-to-bus reconstruction, whereas the
aggregation-aware certificate is conditional on the stated within-province
mass envelope. Failure of that sufficient certificate means that the stronger
aggregation-robust claim is not established under the declared reconstruction
and uncertainty envelope. So, it is not evidence that an operator study
would reject the connection.

Experiment Set I retains the complete policy package and measures the
consequence of authorizing the routed fallback relative to strict evidence
support. Experiment Set II removes the direct uptake targets and introduces
the state-blind control. Within Set II, EvidenceSupported versus
EvidenceRouted identifies fallback authorization, whereas EvidenceRouted
versus BlindFallback isolates evidence conditioning conditional on fallback
availability. Objective and BRI values are defined within an experiment set
and should not be compared across sets as a one-parameter sensitivity.
 
\subsection{Policy Implications}
\label{subsec:policy_implications}

The evidence ledger is best interpreted as a pre-application screening and
contractual due-diligence instrument. It does not replace network-specific AC,
voltage, contingency, redispatch, protection, or implementation studies.

The two experiment sets distinguish hard closure, evidence-routed fallback,
and state-blind fallback. In the strict Experiment-Set-II matched control, the
latter two policies attain equivalent primary objectives and return the same
accepted applicants, award years, routes, and intervention decisions.
The demonstrated incremental role of the ledger is therefore the evidentiary
basis and contractual diagnosis of the offer, not the creation of additional
non-firm capacity once fallback access is already available.

For implementation, an offer should disclose the evidence state and data vintage,
restriction rule and annual budget, fallback deliverability, review dates,
notification and compensation provisions, downgrade conditions, and the
requirements for operator study, reinforcement, and conversion to firm access.

\section{Conclusion}
\label{sec:conclusion}

This paper develops a one-way evidence-to-access framework for calendarized
non-firm connections. Stage A fixes directional candidate calendars;
Stage B applies a solver-certified submitted-product and bounded-repair audit,
supplemented by a joint oracle and an aggregation-aware PTDF
certificate; and Stage C assigns horizon-stable routes before queue and
intervention decisions.

Of 264 directional records, 109 support the submitted or repaired calendar.
The joint threshold is attainable for 97 of 132 province--years and proven
unattainable for 35, yet no applicant receives an $\NFK$ or $\NFR$ award.
In Experiment Set I, authorizing $\NFC$ raises awards from $2.119$ to
$6.442$ GW and reduces backlog from $136.076$ to
$113.601$ GW-years; all $4.489$ GW of non-firm awards use the fallback.

The cross-started Experiment-Set-II matched control confirms that fallback
availability accounts for the volume effect: EvidenceRouted and BlindFallback
attain primary objectives equivalent within EUR~1 and return the same accepted
applicants, award years, routes, and intervention portfolio. Route-mix
uniqueness among all primary-optimal solutions is not claimed. The ledger's
demonstrated role is therefore contractual and diagnostic: it determines the
evidentiary strength and diagnosis of the offered right rather than creating
additional capacity once the same fallback is available.

\bibliographystyle{IEEEtran}
\bibliography{main}

\end{document}